\begin{document}

\title[\hfilneg \hfil Controllability]{Controllability of a second-order impulsive neutral differential equation via resolvent operator technique}
\author[A. Afreen,  A. Raheem  \& A. Khatoon \hfil \hfilneg]
{A. Afreen$^{*}$,   A. Raheem \& A. Khatoon}

\address{A. Afreen \newline
	Department  of Mathematics,
	Aligarh Muslim University,\newline Aligarh -
	202002, India.} \email{afreen.asma52@gmail.com}

\address{A. Raheem \newline Department of Mathematics,
	Aligarh Muslim University,\newline Aligarh -
	202002, India.} \email{araheem.iitk3239@gmail.com}

\address{A. Khatoon \newline
	Department  of Mathematics,
	Aligarh Muslim University,\newline Aligarh -
	202002, India.} \email{areefakhatoon@gmail.com}

\renewcommand{\thefootnote}{} \footnote{$^*$ Corresponding author:
	\url{ A. Afreen (afreen.asma52@gmail.com)}}

\subjclass[2010]{93B05, 34G20, 34K30, 34K40, 34K45}

\keywords{non-autonomous, neutral system, controllability, resolvent operator theory, semigroup theory}

\begin{abstract}
This paper uses the resolvent operator technique to investigate second-order non-autonomous neutral integrodifferential equations with impulsive conditions in a Banach space. We study the existence of a mild solution and the system's approximate controllability. The semigroup and resolvent operator theory,  graph norm, and Krasnoselskii's fixed point theorem are used to demonstrate the results. Finally, we present our findings with an example.
\end{abstract}

\maketitle \numberwithin{equation}{section}
\newtheorem{theorem}{Theorem}[section]
\newtheorem{lemma}[theorem]{Lemma}
\newtheorem{proposition}[theorem]{Proposition}
\newtheorem{corollary}[theorem]{Corollary}
\newtheorem{remark}[theorem]{Remark}
\newtheorem{definition}[theorem]{Definition}
\newtheorem{example}[theorem]{Example}
\allowdisplaybreaks

\section{\textbf{Introduction}} Let $\big(\mathfrak{B},\| \cdot\|\big)$ be a Banach space. Consider the control systems governed by the following neutral integrodifferential  equations with impulses:
\begin{eqnarray} \label{1.1}
	\left \{ \begin{array}{lll} \dfrac{d^2}{dt^2}E(t,\vartheta_t)=\mathcal{A}(t)E(t,\vartheta_t)+\displaystyle\int_{0}^{t}\zeta(t,s)E(s,\vartheta_s)ds+	\pounds_1\big(t,\vartheta(t)\big) +\beta u(t),	\\ \hspace{7.8cm} t \in T= [0,\ell],~ t\neq t_q,
		&\\\Delta \vartheta(t_q)=\mathrm{I}_q\big(\vartheta(t_q)\big), \quad q=1,2,\ldots,N,
		&\\\Delta \vartheta'(t_q)=\mathrm{J}_q\big(\vartheta(t_q)\big), \quad q=1,2,\ldots,N,
		&\\ \vartheta_0=\varPhi \in \wp, \quad \vartheta'(0)=x^1 \in \mathfrak{B}.  
		
	\end{array}\right.
\end{eqnarray}
 $\mathcal{A}(t): \mathfrak{D}\big(\mathcal{A}(t)\big)\subseteq \mathfrak{B}\rightarrow  \mathfrak{B}$ is a densely defined closed linear operator and $\vartheta _t:(-\infty,0]\rightarrow  \mathfrak{B},~ \vartheta_t(s)=\vartheta(t+s)$ belongs to an abstract phase space $\wp$. $\pounds_1:[0,\ell]\times \mathfrak{B}\rightarrow  \mathfrak{B};~\pounds_2, E:[0,\ell]\times \wp\rightarrow  \mathfrak{B},$ with $E(t,\varPsi)=\varPsi(0)+\pounds_2(t, \varPsi)$ are appropriate functions. $\zeta(t,s):\mathfrak{D}(\zeta)\subseteq \mathfrak{B}\rightarrow  \mathfrak{B}$ is a closed linear operator whose domain does not depend on $(t,s).$  Also, assume that $\frac{d}{dt}\pounds_2(t,\vartheta_t)\big\rvert_{t=0}=y^1.$ Let $0=t_0<t_1<t_2<\ldots<t_N<t_{N+1}=\ell;~ \mathrm{I}_q, \mathrm{J}_q:\mathfrak{B}\rightarrow  \mathfrak{B},~ q=1,2,\ldots,N$ are impulsive functions, $\Delta \vartheta(t_q)$ and $ \Delta \vartheta'(t_q)$ indicate the jump of $\vartheta$ and $\vartheta'$ at $t_q.$ Let $PC(T,\mathfrak{B})=\big\{\vartheta:T \rightarrow \mathfrak{B}~|~\vartheta(t)~ \mbox{is continuously differentiable at}~ t\neq t_q; \mbox{ and} ~\vartheta(t_q)=\vartheta(t_q^-), \vartheta'(t_q)=\vartheta'(t_q^-)\big\}.$ $u(\cdot)$ is the control function in a Banach space $ L^2(T,U),$ where $U$ is a Banach space; the operator $\beta:U\rightarrow  \mathfrak{B}$ is linear and continuous.

To investigate the above problem, the existence of a resolvent operator associated with the homogeneous system 
\begin{eqnarray*} 
	\vartheta''(t)=\mathcal{A}(t)\vartheta(t)+\int_{0}^{t}\zeta(t,s)\vartheta(s)ds, \quad t\in T=[0,\ell],
\end{eqnarray*}
is used. Researchers have expressed an interest in studying various problems using the resolvent operator technique in recent years. The resolvent operator takes the place of the $C_ 0$-semigroup in evolution equations and is crucial in solving differential equations in both the weak and strict sense \cite{5,17,3,1,k1,20}. Using the resolvent operators, Grimmer \cite{12} investigated the existence of mild solutions to evolution equations. Rezapour et al. \cite{10} noted the existence of mild solutions via the resolvent operator technique.

It is well known that many real-life phenomena are affected by the sudden change in their state at certain moments, such as heartbeats and blood flow in the human body. These phenomena are discussed in the form of impulses whose duration is negligible compared to the whole process. Such processes are modeled using impulsive differential equations. Analyzing mathematical models requires an understanding of impulsive systems. It has a wide range of applications, including drug diffusion in the human body, population dynamics, theoretical physics, mathematical economy, chemical technology, engineering, control theory, medicine, and so on. More information can be found in the references \cite{6,5,2,18,9,25,32,31}. 

The goal of controllability theory is the ability to control a specific system to the desired state. One of the fundamental ideas for investigating and analyzing various dynamical control processes is controllability theory. Many research papers on the controllability of second-order linear and nonlinear differential systems were presented, for example, see \cite{23,24,d4,29,26}. Most researchers have been found to study autonomous and non-autonomous systems using various techniques, see \cite{15,h3,d,4,8,29,11,16,14}.

Many real-world phenomena can be represented mathematically by a dynamical system governed by neutral differential equations with nonlocal conditions \cite{b1}. Nonlocal problems are more commonly used in applications than classical problems. Many researchers are now paying close attention to this theory and its applications. For more details, refer \cite{h3,7,5,30}.

There are five sections to this paper. The first two sections include an introduction, notations, some required definitions, assumptions, as well as some lemmas. The third section discusses the existence of a mild solution and the system's controllability. To demonstrate the results, an example is provided in the fourth section, and in the last section, a brief conclusion is given.

\section{\textbf{Preliminaries and Assumptions}}
The space $\mathfrak{D}(\mathcal{A})$ provided with the graph norm induced by $\mathcal{A}(t)$ is a Banach space. We will assume that all of these norms are equivalent. A simple condition for obtaining this property is that there exists $\lambda \in \rho(\mathcal{A}(t)),$ the resolvent set of $\mathcal{A}(t)$, so that $(\lambda I- \mathcal{A}(t))^{-1}:\mathfrak{B} \rightarrow \mathfrak{D}(\mathcal{A})$ is a bounded linear operator. In what follows, by $[\mathfrak{D}(\mathcal{A})]$ we represent the vector space $\mathfrak{D}(\mathcal{A})$ provided with any of these equivalent norms, and we denote
$$ \|\vartheta\|_{[\mathfrak{D}(\mathcal{A})]}=\|\vartheta\|+\|\mathcal{A}(t)\vartheta\|, ~\vartheta \in \mathfrak{D}(\mathcal{A}).$$

Let us assume that for every $\vartheta \in \mathfrak{D}(\mathcal{A}),~ t\longmapsto \mathcal{A}(t) \vartheta $ is continuous. Therefore, we consider that $\mathcal{A}(\cdot)$ generates $\{S(t,s)\}_{0\leq s \leq t \leq \ell},$ refer to \cite{29}. We regard that $S(\cdot)\vartheta$ is continuously differentiable for all $\vartheta \in \mathfrak{B}$ with derivative uniformly bounded on bounded intervals.
 We introduce another operator ${C(t,s)}$ associated with the evolution operator $S(t,s)$ as $$C(t,s)=-\frac{\partial S(t,s)}{\partial s}.$$

Let $\varOmega=\big\{(t,s):0\leq s \leq t \leq \ell\big\}.$ The following assumptions hold throughout the paper.\\
\begin{itemize}
	\item[(A1)] The operator $\zeta(\cdot,\cdot):[\mathfrak{D}(\mathcal{A})]\rightarrow \mathfrak{B} $ is bounded and continuous, i.e.,\\
	$$\|\zeta(t,s)\vartheta\| \leq \hbar_1 \|\vartheta\|_{[\mathfrak{D}(\mathcal{A})]},$$
	where $(t,s) \in \varOmega$ and $\hbar_1>0.$	
	\item[(A2)]  There exists $L_{\zeta}>0$ such that
	$$\|\zeta(t_2,s)\vartheta-\zeta(t_1,s)\vartheta\| \leq L_{\zeta}|t_2-t_1| \|\vartheta\|_{[\mathfrak{D}(\mathcal{A})]},$$
for all $\vartheta \in \mathfrak{D}(\mathcal{A}), ~ 0\leq s \leq t_1\leq t_2\leq \ell.$
		\item[(A3)] There exists $\hbar_2>0$ such that
		$$\bigg\|\int_{\eta}^{t} S(t,s)\zeta(s,\eta)\vartheta ds\bigg\| \leq \hbar_2 \|\vartheta\|,$$
		for all $\vartheta \in \mathfrak{D}(\mathcal{A}). $
\end{itemize}
 \begin{definition} A two-parameter family $\{\Re(t,s)\}_{t\geq s}$ on $\mathfrak{B}$ is said to be a resolvent operator for the system
\begin{eqnarray} \label{1.2}
 	\left \{ \begin{array}{ll}	\vartheta''(t)=\mathcal{A}(t)\vartheta(t)+\displaystyle\int_{s}^{t}\zeta(t,\tau)\vartheta(\tau)d\tau, \quad s\leq t \leq \ell,
 &\\\vartheta(s)=0, \quad \vartheta'(s)=x \in \mathfrak{B},	\end{array}\right.
\end{eqnarray}
if
\begin{itemize}
	\item[(i)] the map $\Re:\varOmega \rightarrow L(\mathfrak{B})$ is strongly continuous, $\Re(t,\cdot)\vartheta$ is continuously differentiable $\forall$ $\vartheta \in \mathfrak{B}, \Re(s,s)=0, \dfrac{\partial}{\partial t}\Re(t,s)\Big\rvert_{t=s}=\mathcal{I}$ and $\dfrac{\partial}{\partial s}\Re(t,s)\Big\rvert_{s=t}=-\mathcal{I};$
\item[(ii)] $\Re(t,\cdot)x$ is a solution for (\ref{1.2}), i.e.,
\begin{eqnarray*}\frac{\partial^2}{\partial t^2}\Re(t,s)x=\mathcal{A}(t)\Re(t,s)x+\int_{s}^{t}\zeta(t,\tau)\Re(\tau,s)xd\tau,
	\end{eqnarray*}
for all $0 \leq s\leq t \leq \ell,$ $x \in  \mathfrak{D}(\mathcal{A}).$
\end{itemize}
\end{definition}
We may assume that 
$$\|\Re(t,s)\|\leq M_1,~\Big\|\frac{\partial}{\partial s}\Re(t,s)\Big\|\leq M_2, \quad (t,s) \in \varOmega,$$
where $M_1>0$ and $M_2>0.$ 

Next, the linear operator
\begin{eqnarray*}
	\tilde{F}(t,\eta)x=\int_{\eta}^{t} \zeta(t,s)	\Re(s,\eta)x ds, \quad x \in  \mathfrak{D}(\mathcal{A}),~ 0 \leq \eta \leq t \leq \ell,
\end{eqnarray*}
can be extended to $\mathfrak{B}$ and still denoted by itself $ 	\tilde{F}(t,\eta),~  \tilde{F}:\varOmega \rightarrow L(\mathfrak{B}) $ is strongly continuous, and 
\begin{eqnarray}
	\Re(t,\eta)x=S(t,\eta)+\int_{\eta}^{t} S(t,s)\tilde{F}(s,\eta)x ds, \quad \mbox{for all}~ x \in \mathfrak{B}.
		\end{eqnarray}
Clearly, $	\Re(\cdot)$ is uniformly Lipschitz continuous, i.e., there exists  a constant $L_{\Re}>0$ such that
	\begin{eqnarray}\|	\Re(t+h,\eta)-	\Re(t,\eta)\|\leq L_{\Re}|h|,\quad \mbox{for all} ~t,t+h,\eta \in[0,\ell],
	\end{eqnarray}
and
	\begin{eqnarray}\bigg\|\frac{\partial \Re(t+h,\eta)}{\partial s}-\frac{\partial \Re(t,\eta)}{\partial s}\bigg\|\leq M_\Re|h|,
\end{eqnarray} for $M_\Re>0.$

The phase space $\wp$ has the following properties:
\begin{itemize}
	\item[(B1)] If $\vartheta:(-\infty, \mu+\ell) \rightarrow \mathfrak{B}, \ell>0, \mu \in \mathbb{R}$ is continuous on $[\mu,\mu+\ell)$ and $\vartheta_\mu \in \wp,$ then for every $t \in [\mu,\mu+\ell),$
we have\\
(i) $\vartheta_t$ is in $\wp;$\\
(ii) $\|\vartheta(t)\| \leq K_1 \|\vartheta_t\|_{\wp};$\\
(iii) $\|\vartheta_t\|_{\wp} \leq K_2(t-\mu)\sup\{\|\vartheta(s)\|:\mu \leq s \leq t\}+K_3(t-\mu)\|\vartheta_\mu\|_{\wp},$\\
where $K_1>0$ is a constant; $K_2, K_3:[0,\infty) \rightarrow [1, \infty), $ ~$K_2(\cdot)$ is continuous, $K_3(\cdot)$ is locally bounded, and $K_1, K_2, K_3$ are independent of $\vartheta(\cdot).$
\item[(B2)] For $\vartheta(\cdot)$ in (B1), $t \rightarrow \vartheta_t$ is continuous from $[\mu,\mu+\ell)$ into $\wp.$
\item[(B3)] $\wp$ is complete.
\end{itemize}
For each $r>0,$ define the set $\Theta_r=\big\{\vartheta\in \mathcal{K}(\ell): \|\vartheta(t)\|\leq r ~\mbox{for all} ~t \in T\big\},$ where $\mathcal{K}(\ell)=\big\{\vartheta\in PC(T,\mathfrak{B}):\vartheta(0)=\varPhi(0)\big\}$ is a convex closed subset of $PC(T,\mathfrak{B}).$ To establish the results, we will introduce the required assumptions as follows:
\begin{itemize}
	\item[(C1)] The function $\pounds_1:T\times \mathfrak{B} \rightarrow \mathfrak{B}$ is continuous for all $t \in T$ and strongly measurable for each $\vartheta \in \mathfrak{B}.$ In addition, there is $\nu_r(\cdot)\in L^2(T,\mathbb{R^+})$ for each $r>0$ such that
	\begin{eqnarray*}
		\sup\big\{\|\pounds_1(t,\vartheta(t))\|:\|\vartheta(t)\|\leq r\big\}\leq \nu_r(t), \quad\mbox{for a.e.}~ t \in T,
			\end{eqnarray*}
		and \begin{eqnarray*}
			\lim_{r\rightarrow \infty}\inf \frac{1}{r}\|\nu_r\|_{L^2}=\sigma<\infty.
	 	\end{eqnarray*}
 	\item[(C2)] The closure of $\big\{\Re(t,s)\big[\pounds_1(s,\vartheta(s)) +\beta u(s)\big]:(t,s)\in \Omega,\|\vartheta(s)\|\leq r\big\}$ is compact in $\mathfrak{B}.$
 	\item[(C3)] The function $\pounds_2:T\times \wp \rightarrow \mathfrak{B}$ is continuous and satisfying:\\
 	(i) The set $\big\{\pounds_2(t,\varPsi):\|\varPsi\|_{\wp}\leq \delta\big\}$ is equicontinuous on $T$ and is relatively compact in $\mathfrak{B},$ where $\delta>0.$ \\
 	(ii) There exist $r_1, r_2>0$ such that\\
 	$$\|\pounds_2(t,\varPsi)\|\leq r_1+r_2\|\varPsi\|_{\wp},$$
 	$\forall$ $t \in T$ and $\varPsi \in \wp.$\\
 (iii) There exists $L_2>0$ such that
 $$ 	\big\|\pounds_2(t,\varPsi^1)-\pounds_2(s,\varPsi^2)\big\|\leq L_2\big\|\varPsi^1-\varPsi^2\big\|_{\wp},$$
 $\forall$ $t, s\in T;$ $\varPsi^1, \varPsi^2 \in \wp.$
 \item[(C4)] The function $ \mathrm{I}_q, \mathrm{J}_q:\mathfrak{B}\rightarrow \mathfrak{B}, \quad q=1,2,\ldots,N,$ are continuous operators and there are $d_q, e_q>0$ such that
 $$\big\|\mathrm{I}_q(\vartheta)\big\|\leq d_q\big(\|\vartheta\|+1\big), $$
  $$\big\|\mathrm{J}_q(\vartheta)\big\|\leq e_q\big(\|\vartheta\|+1\big), $$
  where $q=1,2,\ldots,N,~ \vartheta \in \mathfrak{B}. $
  \item[(C5)] The control $u(\cdot)=u(t,\vartheta) \in  L^2(T,U)$ is continuous on $T$ and bounded for every $\vartheta \in \mathfrak{B},$ i.e., $\exists$ $\lambda>0$ such that $\|u(t)\|=\|u(t,\vartheta)\|\leq \lambda \|\vartheta\|.$
\end{itemize}
\begin{lemma}\label{2.2}\cite{5}
$\Re(t,s)$ is compact and continuous in the uniform operator topology of $L(\mathfrak{B})$ for $t,s>0.$ 
	\end{lemma}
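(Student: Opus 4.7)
The plan is to leverage the integral representation
$$\Re(t,\eta) = S(t,\eta) + \int_{\eta}^{t} S(t,s)\tilde{F}(s,\eta)\,ds$$
together with compactness of the evolution family $\{S(t,s)\}$, which is the standing hypothesis underlying the resolvent construction and is also what reference \cite{5} invokes. The argument splits naturally into two parts: compactness of the operator $\Re(t,s)$ at each fixed admissible pair $(t,s)$, and continuity of the map $(t,s) \mapsto \Re(t,s)$ in the uniform operator topology.

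For compactness, first note that $S(t,\eta)$ is compact for $t > \eta$. For the integral term, recall that $\tilde{F}:\varOmega \to L(\mathfrak{B})$ is strongly continuous and uniformly bounded on $\varOmega$ (by (A1) together with the bound $\|\Re(t,s)\|\le M_1$). Therefore the operator-valued integrand $s \mapsto S(t,s)\tilde{F}(s,\eta)$ is a product of a compact operator with a bounded one — hence compact — and is strongly continuous in $s$ on $[\eta,t)$. Approximating the integral by Riemann sums, each partial sum is a finite linear combination of compact operators and thus compact; strong continuity plus the uniform boundedness $\|S(t,s)\tilde{F}(s,\eta)\| \le M_1 \cdot \|\tilde F\|_\infty$ together with dominated convergence shows that these Riemann sums converge in the uniform operator topology to the integral, so the integral is compact. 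Hence $\Re(t,\eta)$ is compact whenever $t > \eta > 0$.

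For continuity in the uniform operator topology, fix $(t_0,\eta_0)$ with $t_0 > \eta_0 > 0$ and estimate
$$\|\Re(t,\eta) - \Re(t_0,\eta_0)\| \le \|S(t,\eta) - S(t_0,\eta_0)\| + \Big\| \int_{\eta}^{t} S(t,s)\tilde{F}(s,\eta)\,ds - \int_{\eta_0}^{t_0} S(t_0,s)\tilde{F}(s,\eta_0)\,ds \Big\|.$$
The first term tends to zero in operator norm because compactness of $S(t,s)$ upgrades its strong continuity to uniform continuity (the classical argument: strong continuity of a compact operator valued family on a compact parameter set is automatically norm continuous). The second term is split into three pieces by adding and subtracting the integrals with mismatched endpoints and with $\tilde F(s,\eta_0)$ in place of $\tilde F(s,\eta)$; each piece is estimated using the uniform bound on $S$ and $\tilde F$, the Lipschitz estimate (2.3) on $\Re$ (which feeds Lipschitz continuity in $\eta$ into $\tilde{F}$ via its definition), and continuity of the integral endpoints. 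Combining these, $\|\Re(t,\eta) - \Re(t_0,\eta_0)\| \to 0$ as $(t,\eta) \to (t_0,\eta_0)$.

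The main obstacle is the first of the two ingredients: promoting the merely strong continuity of the evolution family $S(t,s)$ to norm continuity. This is precisely where compactness of $S(t,s)$ is essential, and without it the uniform-topology continuity of $\Re$ would fail. Once this upgrade is in hand, the remaining estimates for the integral term are routine applications of the Lipschitz bounds (2.3)--(2.4) already recorded and the hypotheses (A1)--(A3).
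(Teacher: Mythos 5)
The paper itself offers no proof of this lemma: it is imported verbatim from reference \cite{5}. Your route through the representation $\Re(t,\eta)=S(t,\eta)+\int_{\eta}^{t}S(t,s)\tilde{F}(s,\eta)\,ds$ is the natural one and is presumably what the cited source does, but as written your argument has two genuine gaps.

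The first is in the compactness of the integral term. To conclude that the integral is compact as a norm limit of Riemann sums of compact operators, you need those sums to converge in the \emph{uniform} operator topology, and for that the integrand $s\mapsto S(t,s)\tilde{F}(s,\eta)$ must be continuous (or at least Bochner integrable) as an $L(\mathfrak{B})$-valued map. The paper only records that $\tilde{F}$ is \emph{strongly} continuous, and strong continuity plus uniform boundedness only controls $\int_{\eta}^{t}S(t,s)\tilde{F}(s,\eta)x\,ds$ for each fixed $x$; it does not force norm convergence of the Riemann sums, so ``dominated convergence'' does not do what you claim. Compactness of $S(t,s)$ does not rescue the step either: if $K$ is compact and $A_{\delta}\to 0$ strongly, $KA_{\delta}$ need not tend to $0$ in norm (on $\ell^{2}$ take $K=\langle\cdot,e_{1}\rangle e_{1}$ and $A_{\delta}=\langle\cdot,e_{n(\delta)}\rangle e_{1}$); the norm upgrade works only when the compact factor sits on the \emph{right} of the strongly convergent one, whereas here it sits on the left. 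You therefore need an additional input --- norm continuity of $\tilde{F}(\cdot,\eta)$, or collective compactness of $\{S(t,s)\tilde{F}(s,\eta):\eta\le s\le t\}$ --- and this must be extracted from (A1)--(A3); it is not automatic.

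The second gap is the asserted ``classical'' upgrade of strong continuity of $S(t,s)$ to norm continuity via compactness. That argument ($T(t+h)-T(t)=(T(h)-I)T(t)$, with $T(h)\to I$ uniformly on the compact set $\overline{T(t)B_{1}}$) relies on the semigroup law, which the sine-type evolution operator $S(t,s)$ of a second-order problem does not satisfy; a strongly continuous family of compact operators is not norm continuous in general. The conclusion you want is nonetheless available by a more elementary route that the paper's standing assumptions already supply: the partial derivatives of $S(\cdot,\cdot)\vartheta$ are uniformly bounded ($\partial S/\partial t$ by hypothesis, $\partial S/\partial s=-C(t,s)$), so $S$ is Lipschitz in operator norm in each variable, and indeed the paper records the corresponding norm-Lipschitz estimate for $\Re$ itself in the first variable. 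Finally, you should state explicitly that compactness of $S(t,s)$ for $t>s$ is an \emph{extra} hypothesis: it appears nowhere among the paper's assumptions, and without it the lemma is simply false.
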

\begin{definition}
	A function $\vartheta:(-\infty,\ell] \rightarrow \mathfrak{B}$ is called a mild solution for (\ref{1.1}) if $\vartheta$ is continuous on $[0,\ell],~ \vartheta_0=\varPhi \in \wp,~ \frac{d}{dt}E(t,\vartheta_t)\big\rvert_{t=0}=x^1+y^1$ and 
	\begin{eqnarray*}
		\vartheta(t)&=&-{\frac{\partial \Re(t,s)\big[\varPhi(0)+\pounds_2(0,\varPhi)\big]}{\partial s}}\bigg\rvert_{s=0}+\Re(t,0)\big[x^1+y^1\big]-\pounds_2(t,\vartheta_t)\\&& \quad +\int_{0}^{t}\Re(t,s)\big[\pounds_1\big(s,\vartheta(s)\big)+\beta u(s)\big] ds\\&& \quad-\sum_{0<t_q<t} \frac{\partial\Re(t,s)}{\partial s}\bigg\rvert_{s=t_q}\mathrm{I}_q\big(\vartheta(t_q)\big) + \sum_{0<t_q<t}\Re(t,t_q)\mathrm{J}_q\big(z(t_q)\big)
	\end{eqnarray*}
is satisfied for $t \in T.$
\end{definition}
\begin{definition}
 The  system (\ref{1.1}) is approximately controllable on $T,$ if for each final state $x_\ell \in \mathfrak{B} $ and each $\varepsilon>0,$  there exists a control $u(\cdot) \in   L^2(T,U)$ such that the mild solution $\vartheta(\cdot,u)$ of (\ref{1.1}) satisfies $\|\vartheta(\ell,u)-x_\ell\|<\varepsilon.$ 
\end{definition}
To show the controllability of (\ref{1.1}), we consider the linear system:
\begin{eqnarray} \label{1.3}
	\left \{ \begin{array}{ll} \dfrac{d^2}{dt^2}E(t,\vartheta_t)=\mathcal{A}(t)E(t,\vartheta_t)+\displaystyle\int_{0}^{t}\zeta(t,s)E(s,\vartheta_s)ds+\beta u(t),	\\\hspace{7cm} t \in T= [0,\ell],
		&\\ \vartheta_0=\varPhi \in \wp, \quad \vartheta'(0)=x^1 \in \mathfrak{B}.  
		
	\end{array}\right.
\end{eqnarray}
Define the controllability operator $\Gamma_{0}^{\ell}: \mathfrak{B} \rightarrow  \mathfrak{B}$ as 
\begin{eqnarray*}
	\Gamma_{0}^{\ell}=\int_{0}^{\ell}\Re(\ell,s)\beta \beta^*\Re^*(\ell,s)ds,
\end{eqnarray*}
where $*$ denotes the adjoint and the resolvent of $\Gamma_{0}^{\ell}$ is given by
\begin{eqnarray*}
\mathcal{V}\big(\varepsilon,\Gamma_{0}^{\ell}\big)=\big(\varepsilon I+\Gamma_{0}^{\ell}\big)^{-1}, \quad \varepsilon>0.
\end{eqnarray*}

\begin{lemma}\label{2.1}\cite{7}
	The linear control system (\ref{1.3}) is approximately controllable on $T$ if and only if $\varepsilon \mathcal{V}\big(\varepsilon,\Gamma_{0}^{\ell}\big)  \rightarrow  0$ as $  \varepsilon\rightarrow 0^+ $ in the strong operator topology.
	\end{lemma}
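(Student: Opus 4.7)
My plan is to follow the classical Mahmudov-type characterization of approximate controllability, exploiting that $\Gamma_{0}^{\ell}$ is a bounded, self-adjoint, nonnegative operator on $\mathfrak{B}$ and that the resolvent identity
\[
\varepsilon\mathcal{V}\bigl(\varepsilon,\Gamma_{0}^{\ell}\bigr) \;=\; I-\Gamma_{0}^{\ell}\mathcal{V}\bigl(\varepsilon,\Gamma_{0}^{\ell}\bigr)
\]
ties the residual error of a steering control directly to the operator whose limit is in question. First I would specialize the mild-solution formula to the linear system (\ref{1.3}) and collect all terms that do not depend on $u$ into a single element
\[
h \;=\; -\frac{\partial\Re(\ell,s)\bigl[\varPhi(0)+\pounds_2(0,\varPhi)\bigr]}{\partial s}\bigg\rvert_{s=0}+\Re(\ell,0)\bigl[x^{1}+y^{1}\bigr]-\pounds_{2}(\ell,\vartheta_\ell)\in\mathfrak{B},
\]
so that $\vartheta(\ell,u)=h+\int_{0}^{\ell}\Re(\ell,s)\beta u(s)\,ds$. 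All of the controllability analysis then takes place on $\mathfrak{B}$ via the control-to-state map $u\mapsto\int_{0}^{\ell}\Re(\ell,s)\beta u(s)\,ds$, whose range's closure I will identify with the set of states approximately reachable from the fixed free evolution $h$.

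For the sufficiency direction, given any target $x_{\ell}\in\mathfrak{B}$ and $\varepsilon>0$, I would set $p=x_{\ell}-h$ and define the control
\[
u_{\varepsilon}(s) \;=\; \beta^{*}\Re^{*}(\ell,s)\,\mathcal{V}\bigl(\varepsilon,\Gamma_{0}^{\ell}\bigr)\,p,
\]
which lies in $L^{2}(T,U)$ because $\Re^{*}(\ell,\cdot)$ is strongly continuous and $\mathcal{V}(\varepsilon,\Gamma_{0}^{\ell})$ is bounded. A direct computation gives
\[
\int_{0}^{\ell}\Re(\ell,s)\beta u_{\varepsilon}(s)\,ds \;=\; \Gamma_{0}^{\ell}\mathcal{V}\bigl(\varepsilon,\Gamma_{0}^{\ell}\bigr)p \;=\; p-\varepsilon\mathcal{V}\bigl(\varepsilon,\Gamma_{0}^{\ell}\bigr)p,
\]
whence $\vartheta(\ell,u_{\varepsilon})-x_{\ell}=-\varepsilon\mathcal{V}(\varepsilon,\Gamma_{0}^{\ell})p$. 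If the strong limit hypothesis holds, the right-hand side tends to $0$ as $\varepsilon\to 0^{+}$, giving approximate controllability.

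For the necessity direction, I would run the argument in reverse through a duality/spectral step. Assuming (\ref{1.3}) is approximately controllable on $T$, the set $\{h+\Gamma_{0}^{\ell}y:y\in\mathfrak{B}\}$ is dense in $\mathfrak{B}$, which (using self-adjointness and positivity of $\Gamma_{0}^{\ell}$) forces $\ker\Gamma_{0}^{\ell}=\{0\}$ and the range of $\Gamma_{0}^{\ell}$ to be dense. Standard functional-calculus/spectral reasoning for a nonnegative self-adjoint bounded operator then yields $\varepsilon(\varepsilon I+\Gamma_{0}^{\ell})^{-1}x\to 0$ for every $x\in\mathfrak{B}$ (one splits $x=\Gamma_{0}^{\ell}y_{n}+r_{n}$ with $\|r_{n}\|\to 0$, and uses the uniform bound $\|\varepsilon\mathcal{V}(\varepsilon,\Gamma_{0}^{\ell})\|\leq 1$ together with $\varepsilon\mathcal{V}(\varepsilon,\Gamma_{0}^{\ell})\Gamma_{0}^{\ell}y_{n}=\Gamma_{0}^{\ell}\varepsilon\mathcal{V}(\varepsilon,\Gamma_{0}^{\ell})y_{n}\to 0$).

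The main obstacle I anticipate is administrative rather than conceptual: carefully isolating the ``free evolution'' $h$ so that the implicit occurrence of $\vartheta_\ell$ inside $\pounds_{2}(\ell,\vartheta_\ell)$ does not interfere with the control-to-state linearity. Since $\pounds_{2}$ does not couple back to the control in (\ref{1.3}) beyond altering the fixed offset, one may treat $h$ as a prescribed element of $\mathfrak{B}$ and reduce the claim to the standard Mahmudov equivalence on $\mathfrak{B}$; the rest is the resolvent identity above.
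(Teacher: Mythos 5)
The paper does not actually prove this lemma --- it is imported verbatim from \cite{7} --- so there is no in-paper argument to compare yours against; what you have written is the standard Mahmudov-type proof that the cited source relies on, and in outline it is the right argument. The sufficiency direction (the feedback control $u_{\varepsilon}=\beta^{*}\Re^{*}(\ell,\cdot)\mathcal{V}(\varepsilon,\Gamma_{0}^{\ell})p$, the identity $\Gamma_{0}^{\ell}\mathcal{V}(\varepsilon,\Gamma_{0}^{\ell})=I-\varepsilon\mathcal{V}(\varepsilon,\Gamma_{0}^{\ell})$, and the resulting error $-\varepsilon\mathcal{V}(\varepsilon,\Gamma_{0}^{\ell})p$) and the necessity direction (density of the range of $\Gamma_{0}^{\ell}$ combined with the uniform bound $\|\varepsilon\mathcal{V}(\varepsilon,\Gamma_{0}^{\ell})\|\le 1$ and the estimate $\|\varepsilon\mathcal{V}(\varepsilon,\Gamma_{0}^{\ell})\Gamma_{0}^{\ell}y_{n}\|\le 2\varepsilon\|y_{n}\|$) are both correct for a bounded, nonnegative, self-adjoint $\Gamma_{0}^{\ell}$ on a Hilbert space.

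Two caveats, one of which is a genuine gap. First, everything you invoke --- the adjoints $\beta^{*}$ and $\Re^{*}$, self-adjointness and nonnegativity of $\Gamma_{0}^{\ell}$, and the bound $\|\varepsilon(\varepsilon I+\Gamma_{0}^{\ell})^{-1}\|\le 1$ --- requires $\mathfrak{B}$ and $U$ to be Hilbert spaces; the paper declares them Banach spaces but writes adjoints anyway, so your proof inherits rather than creates this inconsistency, but the hypothesis should be made explicit. Second, and more substantively, the reduction to $\vartheta(\ell,u)=h+\int_{0}^{\ell}\Re(\ell,s)\beta u(s)\,ds$ with $h$ a \emph{prescribed} element of $\mathfrak{B}$ is not available for (\ref{1.3}) as written: your $h$ contains $\pounds_{2}(\ell,\vartheta_{\ell})$, and $\vartheta_{\ell}$ is the solution segment, which depends on $u$. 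Hence $p=x_{\ell}-h$ and $u_{\varepsilon}$ are defined in terms of each other, and the clean identity $\vartheta(\ell,u_{\varepsilon})-x_{\ell}=-\varepsilon\mathcal{V}(\varepsilon,\Gamma_{0}^{\ell})p$ does not follow by direct computation alone; one must either close the loop with a fixed-point and compactness argument (as the paper does for the nonlinear system in Theorem~\ref{3.3}) or state the lemma for the genuinely linear system obtained by dropping the neutral term --- equivalently, run your argument for the variable $E(t,\vartheta_{t})$ rather than $\vartheta(t)$. Your closing paragraph asserts that $\pounds_{2}$ ``does not couple back to the control,'' but that is precisely what needs proof; this is the one place where the argument as written does not close.
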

\section{\textbf{Main results}}
\begin{lemma}\label{3.1}
	If the conditions (C1),(C2),(C3) and (C5) hold and $u(\cdot) \in  L^2(T,U)$ is bounded, then the operator $P_1:\Theta_r\rightarrow \Theta_r,$ defined by 
	\begin{eqnarray}\label{1}
		(P_1\vartheta)(t)&=&-{\frac{\partial \Re(t,s)\big[\varPhi(0)+\pounds_2(0,\varPhi)\big]}{\partial s}}\bigg\rvert_{s=0}-\pounds_2(t,\vartheta_t)\nonumber\\&& \quad +\int_{0}^{t}\Re(t,s)\big[\pounds_1\big(s,\vartheta(s)\big)+\beta u(s)\big] ds
	\end{eqnarray}
is compact.
	\end{lemma}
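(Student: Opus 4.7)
The plan is to establish compactness of $P_1$ on the bounded closed convex set $\Theta_r$ by an Arzelà--Ascoli argument tailored to the space $PC(T,\mathfrak{B})$. This amounts to verifying three properties: (i) continuity of $P_1$ on $\Theta_r$; (ii) pointwise relative compactness, i.e.\ that for every $t \in T$ the slice $\{(P_1\vartheta)(t) : \vartheta \in \Theta_r\}$ is relatively compact in $\mathfrak{B}$; and (iii) equicontinuity of $P_1(\Theta_r)$ on each subinterval $(t_q,t_{q+1}]$, $q=0,\dots,N$. It is convenient to write
\begin{equation*}
(P_1\vartheta)(t) \;=\; Q_1(t) \;-\; \pounds_2(t,\vartheta_t) \;+\; Q_3(\vartheta)(t),
\end{equation*}
with $Q_1(t) = -\frac{\partial \Re(t,s)}{\partial s}[\varPhi(0)+\pounds_2(0,\varPhi)]\big\rvert_{s=0}$ (independent of $\vartheta$) and $Q_3(\vartheta)(t) = \int_0^t \Re(t,s)[\pounds_1(s,\vartheta(s))+\beta u(s)]\,ds$, and analyze each piece separately.

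For pointwise compactness: $Q_1(t)$ contributes a singleton; the values $\pounds_2(t,\vartheta_t)$ lie in a relatively compact set by (C3)(i), using (B1) together with $\vartheta\in\Theta_r$ to bound $\|\vartheta_t\|_\wp$ uniformly; and for $Q_3$, hypothesis (C2) supplies a compact $K\subset\mathfrak{B}$ containing every integrand value $\Re(t,s)[\pounds_1(s,\vartheta(s))+\beta u(s)]$. The standard mean-value approximation for Bochner integrals then places $Q_3(\vartheta)(t) \in t\cdot\overline{\operatorname{co}}(K)$, a compact set by Mazur's theorem.

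For equicontinuity, fix $t<t+h$ in $(t_q,t_{q+1}]$. Then $\|Q_1(t+h)-Q_1(t)\| \le M_\Re\|\varPhi(0)+\pounds_2(0,\varPhi)\|\,|h|$ by (5); the $\pounds_2$-difference is dispatched by the equicontinuity in (C3)(i) combined with continuity of $t\mapsto\vartheta_t$ from (B2); and the $Q_3$-difference splits as $\int_0^t[\Re(t+h,s)-\Re(t,s)](\cdot)\,ds + \int_t^{t+h}\Re(t+h,s)(\cdot)\,ds$. The first integral is controlled by the Lipschitz estimate (4) together with the $L^2$-dominating function $\nu_r$ from (C1) and the bound on $\beta u$ from (C5), while the second shrinks to zero as $h\to 0$ thanks to the uniform bound $M_1$ on $\|\Re\|$ and $\nu_r\in L^2$. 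Continuity of $P_1$ on $\Theta_r$ in (i) follows routinely from continuity of $\pounds_1,\pounds_2,u$ and dominated convergence with $\nu_r$ as dominator.

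The decisive step, I expect, is the pointwise compactness of $Q_3$: without (C2) one would need an independent compactness input for the parametrised family of resolvent images, which is not immediate, since Lemma~\ref{2.2} only guarantees compactness of $\Re(t,s)$ for $t,s>0$ and would still leave the Bochner integral itself to handle via a separate argument. Given (C2), however, the mean-value-plus-Mazur approach closes the gap cleanly, and the remaining verifications reduce to routine estimates using the bounds collected in (3)--(5) and the growth conditions (C1), (C3), (C5).
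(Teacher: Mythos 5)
Your proposal is correct and follows essentially the same route as the paper: an Arzel\`a--Ascoli argument on $PC(T,\mathfrak{B})$ combining equicontinuity estimates (via the Lipschitz bounds on $\Re$ and $\partial\Re/\partial s$, the growth and continuity properties of $\pounds_2$, and the dominator $\nu_r$ from (C1) together with (C5)) with relative compactness of the time-slices. The only difference is one of completeness: the paper dispatches the relative-compactness step with ``following the idea used in \cite{10}'', whereas you carry it out explicitly via (C2), the mean-value property of Bochner integrals, and Mazur's theorem --- which is exactly the standard argument being invoked there.
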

\begin{proof}
	First, we show that $P_1(\Theta_r)$ is equicontinuous on $[0,\ell].$ For any $s_1,s_2 \in[0,\ell]$ with $s_1<s_2$ and $\vartheta \in \Theta_r,$ we have\\
$\big\|(P_1\vartheta)(s_2)-(P_1\vartheta)(s_1)\big\|$
	\begin{eqnarray*}
	&\leq& \bigg\|\bigg({\frac{\partial \Re(s_1,s)}{\partial s}}-{\frac{\partial \Re(s_2,s)}{\partial s}}\bigg)\bigg\rvert_{s=0}\big[\varPhi(0)+\pounds_2(0,\varPhi)\big]\bigg\|\\&&
		\quad +\Big\|\pounds_2\big(s_1,\vartheta_{s_1}\big)-\pounds_2\big(s_2,\vartheta_{s_2}\big)\Big\|\\&&
		\quad+ \bigg\|\int_{0}^{s_1}\big[\Re(s_2,s)-\Re(s_1,s)\big]\big[\pounds_1\big(s,\vartheta(s)\big)+\beta u(s)\big] ds\bigg\|\\&&
		\quad +\bigg\|\int_{s_1}^{s_2}\Re(s_2,s)\big[\pounds_1\big(s,\vartheta(s)\big)+\beta u(s)\big] ds\bigg\|\\
		&\leq & M_\Re |s_2-s_1|\big[r+\big(r_1+r_2\|\varPhi\|_\wp\big)\big]+L_2\|\vartheta_{s_1}-\vartheta_{s_2}\|_\wp\\&&\quad +\int_{0}^{s_1}L_\Re|s_2-s_1|\big[\|\pounds_1\big(s,\vartheta(s)\big)\|+\|\beta u(s)\|\big]ds\\&&
		\quad+M_1\int_{s_1}^{s_2}\big[\|\pounds_1\big(s,\vartheta(s)\big)\|+\|\beta u(s)\|\big]ds.
		\end{eqnarray*}
Therefore, $	\big\|(P_1\vartheta)(s_2)-(P_1\vartheta)(s_1)\big\| \rightarrow 0$	as $s_2 \rightarrow s_1.$
Hence, $P_1(\Theta _r)$ is equicontinuous on $[0,\ell]$.
Following the idea used in \cite{10}, we can easily show that $P_1(\Theta_r)$ is relatively compact. Hence, by Arzela-Ascoli theorem, the operator $P_1$ is compact.
	\end{proof}
\begin{theorem}\label{3.2}
	If the conditions (B1)-(B3) and (C1)-(C5) hold, then the system (\ref{1.1}) has a mild solution in some $\Theta _r,$ if
	\begin{eqnarray}
		2M_1+M_2+r_2K_2(t-\mu)+M_1\ell\sigma+M_1\|\beta\|\lambda \ell+M_1\sum\limits_{q=0}^{N}e_q+M_2\sum\limits_{q=0}^{N}d_q<1.
	\end{eqnarray}
		\end{theorem}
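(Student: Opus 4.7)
My plan is to apply Krasnoselskii's fixed point theorem on the closed convex set $\Theta_r$ to the map $\Phi$ defined by the right-hand side of the mild-solution formula. I would write $\Phi = P_1 + P_2$, where $P_1$ is exactly the operator already analysed in Lemma~\ref{3.1} (so its compactness on $\Theta_r$ is inherited for free), and
$$(P_2\vartheta)(t) = \Re(t,0)\bigl[x^1+y^1\bigr] - \sum_{0<t_q<t}\frac{\partial \Re(t,s)}{\partial s}\bigg\rvert_{s=t_q}\mathrm{I}_q\bigl(\vartheta(t_q)\bigr) + \sum_{0<t_q<t}\Re(t,t_q)\mathrm{J}_q\bigl(\vartheta(t_q)\bigr)$$
collects the initial-data piece together with the two impulse sums. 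By construction, any fixed point of $P_1+P_2$ in $\Theta_r$ is a mild solution of~(\ref{1.1}).

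The central calculation is to choose $r$ so that $\Phi(\Theta_r)\subseteq\Theta_r$. For $\vartheta\in\Theta_r$ I would bound $\|(\Phi\vartheta)(t)\|$ term by term: the two $\pounds_2$ contributions via (C3)(ii) combined with the phase-space inequality (B1)(iii), which produces a coefficient $r_2 K_2(t-\mu)$ on $r$ plus lower-order constants; the $\pounds_1$-integral via (C1), which is controlled by $M_1\int_0^\ell\nu_r(s)\,ds$; the control integral via (C5), contributing $M_1\|\beta\|\lambda\ell\, r$; the impulse sums via (C4), contributing $M_2(r+1)\sum d_q$ and $M_1(r+1)\sum e_q$; and the remaining initial-data pieces, which contribute the $2M_1$ and $M_2$ terms through $\|\Re\|\le M_1$ and $\|\partial_s\Re\|\le M_2$. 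Dividing the total bound by $r$ and taking $r\to\infty$ eliminates additive constants, while $\liminf_r\|\nu_r\|_{L^2}/r = \sigma$ absorbs the $\pounds_1$ contribution into a term proportional to $M_1\ell\sigma$. What survives is precisely the left-hand side of the hypothesised inequality, so the assumption that it is $<1$ guarantees the existence of $r_0>0$ with $\|\Phi\vartheta(t)\|\le r_0$ on $\Theta_{r_0}$.

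It then remains to verify the other hypotheses of Krasnoselskii on $\Theta_{r_0}$. Compactness of $P_1$ is Lemma~\ref{3.1}, and continuity of $P_2$ follows from continuity of $\Re$, $\partial_s\Re$, $\mathrm{I}_q$ and $\mathrm{J}_q$. For the contraction step I would establish $\|P_2\vartheta-P_2\tilde\vartheta\|\le (M_2\sum d_q + M_1\sum e_q)\|\vartheta-\tilde\vartheta\|$, which is dominated by the hypothesised inequality and is therefore strictly less than $1$. Krasnoselskii's theorem then produces a $\vartheta\in\Theta_{r_0}$ with $(P_1+P_2)\vartheta=\vartheta$, which is the desired mild solution. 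The main obstacle is precisely the contraction step: condition (C4) as written records only a growth bound on $\mathrm{I}_q,\mathrm{J}_q$, so one has to read $d_q,e_q$ as Lipschitz constants of the impulse maps (a standard strengthening in this line of work). If that reading is not accepted, an alternative is to invoke Schauder's theorem by showing compactness of $P_2$ directly, exploiting the finite-sum structure of the impulse contributions together with equicontinuity in $t$ coming from the smoothness of $\Re(t,t_q)$ and $\partial_s\Re(t,s)$.
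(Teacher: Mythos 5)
Your invariance computation is exactly the paper's: the same term-by-term bound, the same division by $r$ and passage to the limit using $\liminf_r \|\nu_r\|_{L^2}/r=\sigma$, arriving at the same contradiction with the hypothesised inequality. The difference lies in how you arrange Krasnoselskii's decomposition, and your primary arrangement does not go through: as you yourself flag, (C4) records only the growth bounds $\|\mathrm{I}_q(\vartheta)\|\le d_q(\|\vartheta\|+1)$, $\|\mathrm{J}_q(\vartheta)\|\le e_q(\|\vartheta\|+1)$, so there is no licence to write $\|P_2\vartheta-P_2\tilde\vartheta\|\le (M_2\sum d_q+M_1\sum e_q)\|\vartheta-\tilde\vartheta\|$, and reinterpreting $d_q,e_q$ as Lipschitz constants is a strengthening of the stated hypotheses, not a proof under them. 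The paper avoids this entirely by taking essentially your fallback route: it puts the impulse sums into the \emph{compact} part, showing $\{P_2\vartheta:\vartheta\in\Theta_r\}$ is relatively compact in $PC(T,\mathfrak{B})$ by working on each subinterval $\overline{T_q}$ separately --- pointwise relative compactness from the compactness of $\Re(t,s)$ (Lemma~\ref{2.2}) together with continuity of $\mathrm{I}_q,\mathrm{J}_q$, and equicontinuity from the Lipschitz estimates $\|\Re(t+h,\eta)-\Re(t,\eta)\|\le L_\Re|h|$ and $\|\partial_s\Re(t+h,\eta)-\partial_s\Re(t,\eta)\|\le M_\Re|h|$ --- and then applying Arzel\`a--Ascoli. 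The only piece left in the ``contraction'' slot is the map $t\mapsto\Re(t,0)[x^1+y^1]$, which is constant in $\vartheta$ and hence trivially a strict contraction, so Krasnoselskii applies with $P_1+P_2$ completely continuous. In short: commit to your alternative (compactness of the impulse part) rather than the Lipschitz reading of (C4), and supply the continuity of $\tilde Q$ on $\Theta_r$ (the paper does this via dominated convergence), and your argument coincides with the paper's.
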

	\begin{proof}
Take the control $u(\cdot)$ as $u(t)=u\big(t,\vartheta(\ell)\big), ~\vartheta \in\Theta _r. $ Define $\tilde{Q}:\Theta _r \rightarrow PC(T,\mathfrak{B})$ by
 	\begin{eqnarray}
 	\big(\tilde{Q}\vartheta\big)(t)=\Re(t,0)\big[x^1+y^1\big]+(P_1\vartheta)(t)+(P_2\vartheta)(t),
 \end{eqnarray} where the map $P_1$ is defined by (\ref{1}) and 
\begin{eqnarray}
(P_2\vartheta)(t)=\sum_{0<t_q<t}\Re(t,t_q)\mathrm{J}_q\big(\vartheta(t_q)\big)-\sum_{0<t_q<t} \frac{\partial\Re(t,s)}{\partial s}\bigg\rvert_{s=t_q}\mathrm{I}_q\big(\vartheta(t_q)\big). \end{eqnarray}
We show that the map $\tilde{Q}$ maps from $\Theta _r$ to $\Theta _r$ for some $r>0.$ Let us assume contrarily that for each $r>0,$ there exist $\vartheta_r \in \Theta _r$ such that $\big\|\tilde{Q}\vartheta_r(t)\big\|>r$ for some $t\in T$. Therefore,
\begin{eqnarray}\label{2}
	r \leq \big\|(\tilde{Q}\vartheta_r)(t)\big\|&\leq& M_1\big\|x^1+y^1\big\|+M_2\big[\|\varPhi(0)\|+\|\pounds_2(0,\varPhi)\|\big]+\big\|\pounds_2(t,\vartheta_t)\big\|\nonumber \\&& \quad+M_1\int_{0}^{t}\big[\|\pounds_1\big(s,\vartheta(s)\big)\|+\|\beta u(s)\|\big] ds +M_1\sum\limits_{q=0}^{N}\big\|\mathrm{J}_q\big(\vartheta(t_q)\big)\big\|\nonumber \\&& \quad+M_2\sum\limits_{q=0}^{N}\big\|\mathrm{I}_q\big(\vartheta(t_q)\big)\big\|\nonumber \\
	&\leq& 2rM_1+M_2\big[r+\big(r_1+r_2\|\varPhi\|_\wp\big)\big]+\big(r_1+r_2\|\vartheta_t\|_\wp\big)+M_1\ell\|\nu_r\|_{L^2}\nonumber \\&& \quad+M_1\|\beta \|\lambda \ell r+M_1\sum_{q=0}^{N}e_q(r+1)+M_2\sum_{q=0}^{N}d_q(r+1)\nonumber \\
	&\leq& 2rM_1+M_2\big[r+\big(r_1+r_2\|\varPhi\|_\wp\big)\big]+r_1+r_2rK_2(t-\mu)\nonumber \\&& \quad+r_2K_3(t-\mu)\|\vartheta_\mu\|_{\wp}+M_1\ell \|\nu_r\|_{L^2}+M_1\|\beta\|\lambda \ell r\nonumber \\&& \quad+M_1\sum\limits_{q=0}^{N}e_q(r+1)+M_2\sum\limits_{q=0}^{N}d_q(r+1).
	\end{eqnarray}
Dividing both sides of (\ref{2}) by $r$ and then taking $r \rightarrow \infty ,$ we get
\begin{eqnarray*}
	1<2M_1+M_2+r_2K_2(t-\mu)+M_1 \ell\sigma+M_1\|\beta \|\lambda \ell+M_1\sum\limits_{q=0}^{N}e_q+M_2\sum\limits_{q=0}^{N}d_q.
\end{eqnarray*}
This is a contradiction. So, there exists  $r>0$ such that $\tilde{Q}(\Theta_r) \subseteq \Theta_r.$ Further, we prove that $\tilde{Q}$ is continuous on $\Theta_r.$ Take any sequence $\{\vartheta_n\}\subset \Theta_r$ with $\vartheta_n \rightarrow \vartheta \in \Theta_r$ as $n \rightarrow \infty.$ Then, for each $t \in T,$ we have
\begin{itemize}
	\item [(a)] $\big\|\pounds_1\big(t,\vartheta_n(t)\big)-\pounds_1\big(t,\vartheta(t)\big)\big\| \rightarrow 0$ as $n \rightarrow \infty.$
	\item[(b)] $\big\|\pounds_1\big(t,\vartheta_n(t)\big)-\pounds_1\big(t,\vartheta(t)\big)\big\| \leq 2 \nu_r(t).$
	\item[(c)]  $\big\|\beta u\big(t,\vartheta_n(t)\big)-\beta u\big(t,\vartheta(t)\big)\big\|<\infty.$
\end{itemize}

\noindent Using Lebesgue Dominated Convergence theorem, we have\\

$	\big\|(\tilde{Q}\vartheta_n)(t)-(\tilde{Q}\vartheta)(t)\big\|$
\begin{eqnarray*}
&\leq& \big\|\pounds_2\big(t,(\vartheta_n)_t\big)-\pounds_2\big(t,\vartheta_t\big)\big\|+\bigg\|\int_{0}^{t}\Re(t,s)\big[\pounds_1\big(s,\vartheta_n(s)\big)-\pounds_1\big(s,\vartheta(s)\big)\big]ds\bigg\|\\&& \quad +\bigg\|\int_{0}^{t}\Re(t,s)\big[\beta u(s,\vartheta_n(s))-\beta u(s,\vartheta(s))\big]ds\bigg\|\\&& \quad +\sum\limits_{0<t_q<t}\Big\|\Re(t,t_q)\big[\mathrm{J}_q\big(\vartheta_n(t_q)\big)-\mathrm{J}_q\big(\vartheta(t_q)\big)\big]\Big\|\\&&\quad +\sum\limits_{0<t_q<t}\bigg\| \frac{\partial\Re(t,s)}{\partial s}\bigg\rvert_{s=t_q}\big[\mathrm{I}_q\big(\vartheta_n(t_q)\big)-\mathrm{I}_q\big(\vartheta(t_q)\big)\big]\bigg\|\\
	&\leq& L_2\big\|(\vartheta_n)_t-\vartheta_t\big\|_\wp+M_1\int_{0}^{\ell}\big\|\pounds_1\big(s,\vartheta_n(s)\big)-\pounds_1\big(s,\vartheta(s)\big)\big\|ds\\&&\quad +M_1\int_{0}^{\ell}\big\|\beta u\big(s,\vartheta_n(s)\big)-\beta u\big(s,\vartheta(s)\big)\big\|ds
	+M_1\sum\limits_{q=0}^{N}\Big\|\mathrm{J}_q\big(\vartheta_n(t_q)\big)-\mathrm{J}_q\big(\vartheta(t_q)\big)\Big\|\\&&\quad 
	+M_2\sum\limits_{q=0}^{N}\Big\|\mathrm{I}_q\big(\vartheta_n(t_q)\big)-\mathrm{I}_q\big(\vartheta(t_q)\big)\Big\|,\\&&	
	\rightarrow 0 ~\mbox{as}~ n \rightarrow \infty.
\end{eqnarray*}
Thus, $\tilde{Q}$ is continuous on $\Theta_r.$ Let $T_0=[0,t_1], T_1=(t_1,t_2],\ldots,T_N=(t_N,\ell],$ then the map $P_2$ can be rewritten as
\begin{eqnarray*} 
(P_2\vartheta)(t)=	\left \{ \begin{array}{lll}
		0,\quad t \in T_0,
		&\\ \Re(t,t_1)\mathrm{J}_1\big(\vartheta(t_1)\big)-\dfrac{\partial\Re(t,s)}{\partial s}\bigg\rvert_{s=t_1}\mathrm{I}_1\big(\vartheta(t_1)\big), \quad t \in T_1,
		&\\ \ldots,
		&\\\sum\limits_{q=1}^{N}\Re(t,t_q)\mathrm{J}_q\big(\vartheta(t_q)\big)-\sum\limits_{q=1}^{N} \dfrac{\partial\Re(t,s)}{\partial s}\bigg\rvert_{s=t_q}\mathrm{I}_q\big(\vartheta(t_q)\big), \quad t\in T_N. 
\end{array}\right.
\end{eqnarray*}
Since $\mathrm{I_1},\mathrm{J_1}$ are continuous and $\Re$ is compact. As a result, the set $\{P_2\vartheta:\vartheta\in \Theta_r\}$ is relatively compact in $\mathfrak{B}$ for every $t \in \overline{T_1}$  (closure of $T_1$). For any $s_1,s_2 \in \overline{T_1}$ with $s_1<s_2$ and $\vartheta \in \Theta_r,$ it follows from Lemma~\ref{2.2} that 
\begin{eqnarray*}
	\bigg\|\mathrm{J}_1\big(\vartheta(t_1)\big)\big[\Re(s_2,t_1)-\Re(s_1,t_1)\big]\bigg\|+\bigg\|\mathrm{I}_1\big(\vartheta(t_1)\big) \bigg(\frac{\partial\Re(s_2,s)}{\partial s}\bigg\rvert_{s=t_1}-\frac{\partial\Re(s_1,s)}{\partial s}\bigg\rvert_{s=t_1}\bigg)\bigg\|\\
	\leq e_1(r+1)L_\Re|s_2-s_1|+d_1(r+1)M_\Re|s_2-s_1|,\\
	\rightarrow 0 ~\mbox{as}~ s_1 \rightarrow s_2 ~\mbox{independent of }~ \vartheta\in \Theta_r.
\end{eqnarray*} 
Thus, the set $\big\{P_2\vartheta:\vartheta\in \Theta_r\big\}$ is equicontinuous on $\overline{T_1}.$ Therefore, by Arzela-Ascoli theorem, the set  $\big\{P_2\vartheta:\vartheta\in \Theta_r\big\}$ is relatively compact in $C(\overline{T_1},\mathfrak{B}).$ In the same manner, we can show for $T_q, q=2,3,\ldots,N.$ It follows from \cite{5} that $\big\{P_2\vartheta:\vartheta\in \Theta_r\big\}$ is relatively compact in $PC(T,\mathfrak{B}).$ Moreover, the map $\Re(t,0)\big[x^1+y^1\big]:\Theta_r \rightarrow \Theta_r$ is contraction as $\Re$ is Lipschitz continuous. But the map $P_1+P_2:\Theta_r \rightarrow \Theta_r$ is completely continuous. Therefore, by Krasnoselskii's fixed point theorem, $\tilde{Q}$ has a fixed point in $\Theta_r,$ which is the mild solution of (\ref{1.1}).
\end{proof}
\begin{theorem}\label{3.3}
	Suppose that hypotheses (B1)-(B3), (C1)-(C4) hold, and the functions $\pounds_1, \pounds_2,\mathrm{I}_q, \mathrm{J}_q,( q=1,2,\ldots,N)$ are uniformly bounded. If the associated linear system (\ref{1.3}) is approximately controllable on $T,$ then (\ref{1.1}) is approximately controllable on $T.$
\end{theorem}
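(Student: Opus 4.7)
The plan is to run the standard Mahmudov-type feedback-control argument, adapted here to the resolvent operator $\Re(t,s)$ of the nonautonomous linear equation. For a target state $x_\ell \in \mathfrak{B}$, I would collect the non-control terms of the mild-solution formula at $t=\ell$ into a single ``gap'' functional
\begin{align*}
p(\vartheta) &= x_\ell + \frac{\partial \Re(\ell,s)[\varPhi(0)+\pounds_2(0,\varPhi)]}{\partial s}\bigg\rvert_{s=0} - \Re(\ell,0)\bigl[x^1+y^1\bigr] + \pounds_2(\ell,\vartheta_\ell) \\
&\quad - \int_0^\ell \Re(\ell,s)\pounds_1\bigl(s,\vartheta(s)\bigr)\,ds \\
&\quad + \sum_{0<t_q<\ell}\frac{\partial\Re(\ell,s)}{\partial s}\bigg\rvert_{s=t_q}\mathrm{I}_q\bigl(\vartheta(t_q)\bigr) - \sum_{0<t_q<\ell}\Re(\ell,t_q)\mathrm{J}_q\bigl(\vartheta(t_q)\bigr),
\end{align*}
and then, for each $\varepsilon>0$, pick the feedback control
\begin{equation*}
u_\varepsilon(t,\vartheta) = \beta^{*}\,\Re^{*}(\ell,t)\, \mathcal{V}\bigl(\varepsilon,\Gamma_0^{\ell}\bigr)\, p(\vartheta).
\end{equation*}
Uniform boundedness of $\pounds_1,\pounds_2,\mathrm{I}_q,\mathrm{J}_q$ together with $\|\varepsilon\mathcal{V}(\varepsilon,\Gamma_0^{\ell})\|\le 1$ makes $u_\varepsilon$ satisfy (C5) with a $\vartheta$-independent bound, so Theorem~\ref{3.2} supplies a mild solution $\vartheta^\varepsilon$ of (\ref{1.1}) corresponding to $u_\varepsilon$.

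Substituting $u_\varepsilon$ into the mild-solution formula at $t=\ell$ and using the algebraic identity $\Gamma_0^{\ell}\mathcal{V}(\varepsilon,\Gamma_0^{\ell}) = I-\varepsilon\mathcal{V}(\varepsilon,\Gamma_0^{\ell})$, a short cancellation yields the fundamental identity
\begin{equation*}
\vartheta^\varepsilon(\ell) - x_\ell \;=\; -\,\varepsilon\,\mathcal{V}\bigl(\varepsilon,\Gamma_0^{\ell}\bigr)\, p(\vartheta^\varepsilon).
\end{equation*}
The theorem will be proven once I show the right-hand side converges to $0$ in norm as $\varepsilon \downarrow 0$.

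The family $\{p(\vartheta^\varepsilon)\}_{\varepsilon>0}$ is bounded by the uniform-boundedness hypothesis, but I would upgrade this to relative compactness in $\mathfrak{B}$: the integral term is relatively compact by (C2) together with compactness of $\Re(\ell,s)$ from Lemma~\ref{2.2}; the finite impulsive sums are relatively compact for the same reason; $\pounds_2(\ell,\vartheta^\varepsilon_\ell)$ is relatively compact by (C3)(i); and the remaining summands of $p$ do not depend on $\vartheta^\varepsilon$. Hence, along some subsequence $\varepsilon_n \downarrow 0$, $p(\vartheta^{\varepsilon_n}) \to p^{*}$ strongly for some $p^{*} \in \mathfrak{B}$. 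Splitting
\begin{equation*}
\bigl\|\varepsilon_n\mathcal{V}(\varepsilon_n,\Gamma_0^{\ell}) p(\vartheta^{\varepsilon_n})\bigr\| \;\le\; \bigl\|p(\vartheta^{\varepsilon_n}) - p^{*}\bigr\| + \bigl\|\varepsilon_n\mathcal{V}(\varepsilon_n,\Gamma_0^{\ell}) p^{*}\bigr\|,
\end{equation*}
the first summand vanishes from strong convergence combined with $\|\varepsilon_n\mathcal{V}(\varepsilon_n,\Gamma_0^{\ell})\|\le 1$, and the second vanishes by Lemma~\ref{2.1} applied to the fixed element $p^{*}$. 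A standard ``every subsequence has a further subsequence'' argument then promotes the conclusion to the whole family $\varepsilon \to 0^{+}$, giving $\|\vartheta^\varepsilon(\ell)-x_\ell\|\to 0$.

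The main obstacle will be justifying strong relative compactness of $\{p(\vartheta^\varepsilon)\}$: mere weak compactness is insufficient because $\varepsilon\mathcal{V}(\varepsilon,\Gamma_0^{\ell})$ is only known to tend to $0$ in the strong operator topology, not uniformly, so without strong convergence of the arguments the splitting estimate above cannot be closed. Exploiting the compactness of the resolvent operator in Lemma~\ref{2.2} together with (C2) and (C3)(i) is precisely the structural feature that turns the uniform boundedness of the nonlinear data into the strong compactness needed to drive the feedback loop to the target state.
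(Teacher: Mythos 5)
Your proposal is correct and follows essentially the same route as the paper: the same gap functional $p(\vartheta)$, the same feedback control $u_\varepsilon=\beta^*\Re^*(\ell,t)\mathcal{V}(\varepsilon,\Gamma_0^\ell)p(\vartheta)$, the same identity $\vartheta^\varepsilon(\ell)-x_\ell=-\varepsilon\mathcal{V}(\varepsilon,\Gamma_0^\ell)p(\vartheta^\varepsilon)$, and the same compactness-plus-subsequence argument feeding into Lemma~\ref{2.1}. Your explicit handling of the subsequence extraction and the ``every subsequence has a further convergent subsequence'' step is in fact slightly more careful than the paper, which passes to convergent subsequences of the individual terms and then writes the convergence as if it held for the whole family.
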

\begin{proof}
	Let $b \in \mathfrak{B}$ and $\varepsilon>0$ be constant. Define the control $u(t)$ as
	\begin{eqnarray}
		u(t):=u_{\varepsilon}(t,\vartheta)=\beta ^* \Re^*(\ell,t)\mathcal{V}\big(\varepsilon,\Gamma_{0}^{\ell}\big)p(\vartheta),
	\end{eqnarray}
where \begin{eqnarray*}
	p(\vartheta)&=&b+{\frac{\partial \Re(\ell,s)\big[\varPhi(0)+\pounds_2(0,\varPhi)\big]}{\partial s}}\bigg\rvert_{s=0}-\Re(\ell,0)\big[x^1+y^1\big]+\pounds_2(\ell,\vartheta_\ell)\\&& \quad -\int_{0}^{\ell}\Re(\ell,s)\pounds_1\big(s,\vartheta(s)\big) ds+\sum\limits_{q=1}^{N} \frac{\partial\Re(\ell,s)}{\partial s}\bigg\rvert_{s=t_q}\mathrm{I}_q\big(\vartheta(t_q)\big)\\&& \quad - \sum\limits_{q=1}^{N}\Re(\ell,t_q)\mathrm{J}_q\big(\vartheta(t_q)\big).
\end{eqnarray*}
It follows from Lemma~\ref{3.1} and Theorem~\ref{3.2} that the control $u(t)$ satisfies (C5). Define a map $\tilde{Q}_\varepsilon:PC(T,\mathfrak{B}) \rightarrow PC(T,\mathfrak{B}) $ as defined in Theorem~\ref{3.2}
\begin{eqnarray}\label{7}
	(\tilde{Q}_\varepsilon \vartheta)(t)&=&\Re(t,0)\big[x^1+y^1\big]+(P_1\vartheta)(t)+\sum\limits_{q=1}^{N}\Re(t,t_q)\mathrm{J}_q\big(\vartheta(t_q)\big) \nonumber\\&& \quad-\sum\limits_{q=1}^{N} \frac{\partial\Re(t,s)}{\partial s}\bigg\rvert_{s=t_q}\mathrm{I}_q\big(\vartheta(t_q)\big),
\end{eqnarray}
where $P_1$ is given by (\ref{1}).
From Theorem~\ref{3.2}, it is clear that (\ref{7}) has a fixed point $\vartheta_\varepsilon,$ which is a mild solution of (\ref{1.1}). Let $\vartheta_\varepsilon \in \tilde{Q}_\varepsilon$ be a fixed point. Easily, we see that
\begin{eqnarray}\label{4}
	\vartheta_\varepsilon(\ell)&=&\Re(\ell,0)\big[x^1+y^1\big]+(P_1\vartheta_\varepsilon)(\ell)+\sum\limits_{q=1}^{N}\Re(\ell,t_q)\mathrm{J}_q\big(\vartheta_\varepsilon(t_q)\big) \nonumber\\&& \quad-\sum\limits_{q=1}^{N} \frac{\partial\Re(\ell,s)}{\partial s}\bigg\rvert_{s=t_q}\mathrm{I}_q\big(\vartheta_\varepsilon(t_q)\big)\nonumber \\&=& b-\varepsilon \mathcal{V}\big(\varepsilon,\Gamma_{0}^{\ell}\big)p(\vartheta_\varepsilon).
\end{eqnarray}
The uniform boundedness of $ \pounds_1, \pounds_2, \mathrm{I}_q,$ $ \mathrm{J}_q$ and compactness of $\Re(t,s)$ imply that there are subsequences of $\int_{0}^{\ell}\Re(\ell,s)\pounds_1(s,\vartheta_\varepsilon(s)) ds,$  $\pounds_2(\ell,(\vartheta_\varepsilon)_\ell),$ $\sum \limits_{q=1}^{N}\Re(\ell,t_q)\mathrm{J}_q\big(\vartheta_\varepsilon(t_q)\big)$ and $\sum \limits_{q=1}^{N} \frac{\partial\Re(\ell,s)}{\partial s}\Big\rvert_{s=t_q}\mathrm{I}_q\big(\vartheta_\varepsilon(t_q)\big)$   denoted by themselves, respectively; that converge to  $\hat{\pounds_1}, \hat{\pounds_2}, \hat{J}$ and $\hat{I},$  respectively. 
\\Let 
$\chi=b+{\dfrac{\partial \Re(\ell,s)\big[\varPhi(0)+\pounds_2(0,\varPhi)\big]}{\partial s}}\bigg\rvert_{s=0}-\Re(\ell,0)\big[x^1+y^1\big]+\hat{\pounds}_{2}-\hat{\pounds}_1+\hat{I}-\hat{J},$ then
\begin{eqnarray}\label{5}
	\big\|p(\vartheta_\varepsilon)-\chi\big\| &\leq& \big\|\pounds_2\big(\ell,(\vartheta_\varepsilon)_\ell\big)-\hat{\pounds_2}\big\|+\bigg\|\int_{0}^{\ell}\Re(\ell,s)\pounds_1\big(s,\vartheta_\varepsilon(s)\big) ds-\hat{\pounds_1}\bigg\|\nonumber\\&& \quad +\Bigg\|\sum\limits_{q=1}^{N} \frac{\partial\Re(\ell,s)}{\partial s}\bigg\rvert_{s=t_q}\mathrm{I}_q\big(\vartheta_\varepsilon(t_q)\big)-\hat{I}\Bigg\|\nonumber\\&& \quad +\Bigg\|\sum\limits_{q=1}^{N}\Re(\ell,t_q)\mathrm{J}_q\big(\vartheta_\varepsilon(t_q)\big)-\hat{J}\Bigg\|,\nonumber\\&&
	\rightarrow 0 ~\mbox{as} ~ \varepsilon \rightarrow 0.
	\end{eqnarray}
Using (\ref{4}), (\ref{5}) and Lemma~\ref{2.1}, we obtain
\begin{eqnarray*}
	\big\|\vartheta_\varepsilon(\ell)-b\big\|&\leq& \big\|\varepsilon \mathcal{V}\big(\varepsilon,\Gamma_{0}^{\ell}\big)(\chi)\big\|+\big\|\varepsilon \mathcal{V}\big(\varepsilon,\Gamma_{0}^{\ell}\big)\big\| \big\|p(\vartheta_\varepsilon)-\chi\big\|\\&& \rightarrow 0 ~\mbox{as}~ \varepsilon \rightarrow 0.
\end{eqnarray*}
	\end{proof}

\section{\textbf{Application}}
{\textbf{Example 4.1.}}
Consider the following neutral integro-differential equations:
\begin{eqnarray}  \label{4.1}
	\left \{ \begin{array}{lll} \dfrac{\partial^2}{\partial t^2}\big[\varrho(t,y)+\pounds_2\big(t,\varrho(t-\delta_1,y)\big)\big]\\\hspace{0.6cm} = \dfrac{\partial^2 }{\partial y^2}\big[\varrho(t,y)+\pounds_2\big(t,\varrho(t-\delta_1,y)\big)\big] +\mathcal{F}(t)\big[\varrho(t,y)+\pounds_2\big(t,\varrho(t-\delta_1,y)\big)\big] \\\hspace{0.8cm}+\displaystyle \int_{0}^{t}\hbar(t-s)\frac{\partial^2 }{\partial y^2}\big[\varrho(t,y)+\pounds_2\big(t,\varrho(t-\delta_1,y)\big)\big]ds\\\hspace{1cm}
		+\pounds_1\big(t,\varrho(t,y)\big)+u(t,y),
	\quad y \in [0,2\pi], ~t \in [0,\ell],~t \neq t_q,
		&\\\varrho(t,0)=\varrho(t,2\pi)=0,\quad t \in [0,\ell],  
		&\\\varrho(\theta,y)=\varPhi(\theta,y),  {\dfrac{\partial}{\partial t}}\varrho(t,y)\Big\rvert_{t=0}=b_1(y), \quad \theta \in(-\infty,0],~y \in [0, 2\pi],
		&\\\Delta \varrho(t,y)\big\rvert_{t=t_q}= \displaystyle\int_{0}^{2\pi}\varphi_q(\xi,y)\frac{(\varrho(t_q,\xi))^2}{\pi\big(1+(\varrho(t_q,\xi))^2\big)}d\xi, \quad q=1,2,3,\ldots,N,
		&\\\Delta \frac{\partial}{\partial t} \varrho(t,y)\big\rvert_{t=t_q}= \displaystyle\int_{0}^{2\pi}\varsigma_q(\xi,y)\frac{(\varrho(t_q,\xi))^4}{2e^2\big(1+(\varrho(t_q,\xi))^4\big)}d\xi,  \quad q=1,2,3,\ldots,N,
	\end{array}\right.
\end{eqnarray}

where  $t_q$ are the impulse points on $[0,\ell]$ such that $0=t_0<t_1<t_2<\ldots<t_N<t_{N+1}=\ell$ ($q=0, 1, 2,\ldots,N$), $\mathcal{F}, \hbar:[0,\ell]\rightarrow \mathbb{R},$ $\varPhi:(-\infty,0]\times [0,2\pi] \rightarrow \mathbb{R} $ and $b_1:[0,2\pi] \rightarrow \mathbb{R} $ satisfy suitable constraints.

Let $\mathfrak{B}=L^2(0,2\pi)$ be the Banach space endowed with norm $\|\cdot\|$ such that $\varPhi(\theta,\cdot), b_1(\cdot) \in \mathfrak{B}.$ Here, $\wp$ be a phase space and
$\varPhi(\theta)(y)=\varPhi(\theta,y)$ for  $\theta \in(-\infty,0],y \in [0, 2\pi].$ We will assume that the map $\pounds_1$ satisfies the conditions (C1)-(C2).

We define the operator $\mathcal{F}_0$ as $$\mathcal{F}_0\big[\varrho(t,y)+\pounds_2\big(t,\varrho(t-\delta_1,y)\big)\big]=\frac{\partial^2}{\partial y^2}\big[\varrho(t,y)+\pounds_2\big(t,\varrho(t-\delta_1,y)\big)\big] $$ with domain
$$\mathfrak{D}(A)=\big\{\varrho \in \mathfrak{B}|\varrho,\varrho'~ \mbox{are absolutely continuous}~ \varrho'' \in \mathfrak{B}, \mbox{and}~\varrho(0)=\varrho(2\pi)=0\big\}.$$ 
Clearly, $\mathcal{F}_0$ generates a cosine family $\{C(t)\}_{t\geq 0}$ on $\mathfrak{B}$ associated with sine family  $\{S(t)\}_{t\geq 0}.$ Moreover, it has discrete spectrum with eigen values $-m^2$ for $m\in \mathbb{N},$ whose corresponding eigen vectors are $$\vartheta_m(x)=\frac{1}{\sqrt{2\pi}}e^{imx}, ~m\in \mathbb{N}.$$
The set $\{\vartheta_m:m\in\mathbb{N}\}$ is an orthonormal basis of $\mathfrak{B}.$ Therefore, we have 
$$\mathcal{F}_0\varrho=\sum \limits_{m \in \mathbb{N}}{-m^2} \langle \varrho,\vartheta_m \rangle \vartheta_m, ~ \varrho\in \mathfrak{D}(\mathcal{F}_0),$$ 
$$C(t)\varrho=\sum \limits_{m \in \mathbb{N}}\cos(mt)\langle \varrho,\vartheta_m \rangle \vartheta_m, ~ t \in\mathbb{R},$$
and $$S(t)\varrho=\sum \limits_{m \in \mathbb{N}}\frac{\sin(mt)}{m}\langle \varrho,\vartheta_m \rangle \vartheta_m, ~ t \in\mathbb{R}.$$
Define $\mathcal{A}(t)\varrho=\mathcal{F}_0\varrho+\mathcal{F}(t)\varrho$ on $\mathfrak{D}(\mathcal{A}).$ By defining $\zeta(t,s)=\hbar(t-s)\mathcal{F}_0$ for $0\leq s\leq t \leq \ell$ on $\mathfrak{D}(\mathcal{A})$ and following the above, we can rewrite (\ref{4.1}) into an abstract form (\ref{1.1}). Furthermore, there exists a resolvent operator $\{\Re(t,s)\}_{0\leq s\leq t \leq \ell}$ associated with the homogeneous system of (\ref{4.1}). Also, we can easily show that all the assumptions are satisfied.\\

\textbf{Remark(i)} Under the above conditions, Theorem~\ref{3.2} guarantees that $\exists$ a mild solution for  (\ref{4.1}).\\ 
\textbf{(ii)} In addition, if the functions $\pounds_1, \pounds_2, \varphi_q, \varsigma_q ( q=1,2,\ldots,N)$ are uniformly bounded and the associated linear system of (\ref{4.1}) is approximately controllable on $[0,\ell],$ then by Theorem~\ref{3.3}, the system (\ref{4.1}) is approximately controllable on $[0,\ell].$

\section{ \textbf{Conclusion}} In this work, we established some sufficient conditions for the existence of mild solutions of the second-order non-autonomous neutral integrodifferential equations with impulsive conditions. And then, we study the approximate controllability of the system. The resolvent operator technique was used to obtain the results. We gave an example to show the effectuality of the results. By applying the same technique to fractional-order neutral differential equations, we will make some remarks on the nature of the system.

\textbf{{Acknowledgment}}
	{\it The authors would like to thank the referees for their valuable suggestions. The first and third authors acknowledge UGC, India, for providing financial support through MANF F.82-27/2019 (SA-III)/ 4453 and F.82-27/2019 (SA-III)/191620066959, respectively.}

\end{document}